\documentclass[a4paper,11pt,reqno]{article}

%numbering 
%\numberwithin{equation}{section}

\usepackage{a4wide}
\usepackage[latin1]{inputenc}

\usepackage{graphicx}
\usepackage{amsmath}
\usepackage{amsthm}
\usepackage{amssymb}
\usepackage{color}
\usepackage[normalem]{ulem} %emphasize weiterhin kursiv
\usepackage {ulem} %\emph{Text}: Text wird unterstrichen

%fancy enumerations (for example \begin{enumerate}[(i)])

\usepackage{enumerate}
%\usepackage{showkeys}
%\usepackage{psfrag}
%first group

\usepackage{url}

\theoremstyle{plain}
\newcommand{\Tm}[1]{\ensuremath{T_{MRCA}^N( #1 )}}%
\newtheorem{theorem}{Theorem}[section]
\newtheorem{proposition}[theorem]{Proposition}
\newtheorem{lemma}[theorem]{Lemma}

\newtheorem{corollary}[theorem]{Corollary}

\newtheorem{definition}[theorem]{Definition}

%second group

\theoremstyle{definition}

\theoremstyle{remark}

%abbrevs

\newcommand{\N}{\ensuremath{\mathbb{N}}}

\renewcommand{\P}{\ensuremath{\mathbb{P}}}

\newcommand{\be}{\begin{equation}}
\newcommand{\ee}{\end{equation}}

%for nice drawings

%opening
\title{Genealogy of a Wright Fisher model with strong seed bank component}

\author{Jochen Blath, Bjarki Eldon, Adri\'an Gonz\'alez Casanova, Noemi Kurt\footnote{Corresponding author. E-mail: {\tt kurt@math.tu-berlin.de}}\\
TU Berlin\footnote{Address: Institut f\"ur Mathematik, Sekr. MA 7-5, Stra\ss e des 17. Juni 136, 10623 Berlin, Germany}}
\date{\today}

\begin{document}

\maketitle
\begin{abstract}
We investigate the behaviour of the genealogy of a Wright-Fisher population model under the influence of a strong seed-bank effect. More precisely, we consider a simple seed-bank age distribution with two atoms, leading to either classical or long genealogical jumps (the latter modeling the effect of seed-dormancy). We assume that the length of these long jumps scales like a power $N^\beta$ of the original population size $N$, thus giving rise to a `strong' seed-bank effect. For a certain range of $\beta$, we prove that the ancestral process of a sample of $n$ individuals converges under a non-classical time-scaling to Kingman's $n-$coalescent. Further, for a wider range of parameters, we analyze the time to the most recent common ancestor of two individuals analytically and by simulation.

\medskip
\noindent
\emph{MSC 2010 classification:} 60K35, 92D15\\
\emph{Keywords:} Seed banks, Wright-Fisher model, Kingman's coalescent
\end{abstract}

\section{Introduction}
We consider a generalization of the classical Wright Fisher model in the following sense: Consider a neutral, haploid population of fixed size $N$ that reproduces asexually in discrete generations indexed by the natural numbers. 
In contrast to the Wright-Fisher model, at each new generation, while most of the population stems from direct reproduction of individuals of the previous generation,  a few remaining individuals obtain their type from a parent having lived in the (possibly far) past. In the retrospective viewpoint, an individual alive in generation 0, instead of selecting its parent uniformly from the previous generation as in the classical  Wright Fisher model, uses some probability measure $\mu$ on $\N$ to 
sample the distance in generations to its parent,
and then picks its ancestor uniformly among the individuals at the sampled distance. The biological interpretation of this mechanism is that it allows old genes to become re-activated in a population after some time as the result of a \emph{seed bank effect}. Such an effect could be seen as an `evolutionary force' that may have to be taken into account for populations that produce dormant forms, such as plant seeds or bacterial spores. Some dormant forms may remain inactive for a long time, and, after becoming active again, potentially re-introduce old genetic material into the present population, thus increasing genetic variability \cite{Azotobacter, Levin1990, Tellier, Vitalis}. In the case where the dormancy is on a time scale which is non-negligible with respect to the population size, this is expected to lead to drastic changes in the genealogy of such a population. In the present paper we analyze a simple mathematical model which illustrates this effect.

\medskip

Informally, the model can be described as follows. Fix the population size $N\in\N$ and a 
probability measure $\mu$ on the natural numbers. This measure determines the generation of the immediate ancestor of an individual backward in time, meaning that an individual living at time $k\in\N$ has its immediate ancestor in generation $k+l$ with probability $\mu(l).$ 
Such a genealogical process, in the case where $\mu$ has \emph{finite} support independent of $N,$ was introduced and analyzed in \cite{KKL}, where it was shown that the genealogy converges, after classical rescaling by the population size, to a constant time change of Kingman's coalescent. 

\medskip 
In
\cite{BGKS}, the case of a stronger seed bank effect with unbounded measure $\mu$ was considered. More precisely, power law distributions of the form 
$$\mu_\alpha\big(\{n,n+1,...\}\big)=L(n)n^{-\alpha}$$
for some $\alpha>0$ and some slowly varying function $L(n)$ were investigated. Three regimes concerning the time to the most recent common ancestor were identified: If $\alpha>1,$ then
the expected time to the most recent common ancestor is of order $N,$ and the ancestral process converges to a constant time change of 
Kingman's coalescent under classical rescaling by the population size. For $1/2<\alpha<1,$ the time to the most recent common ancestor is finite almost surely, but
the expectation does not exist for any $N.$ If $\alpha<1/2,$ then there might be no common ancestor at all. The boundary cases $\alpha=1$
and $\alpha=1/2$ depend on the choice of $L(n).$

\medskip

In the present paper, we investigate a rather natural set up that was not considered in \cite{BGKS}. We chose $\mu$ to be of the following form:
For $N\in\N$ fixed, $\beta>0$ 
and $\varepsilon \in (0,1),$ let
\begin{equation}
\mu_N=(1-\varepsilon)\delta_1+\varepsilon\delta_{N^{\beta}}.
\end{equation}
This means that in each new generation, a proportion $(1-\varepsilon)$ of the total population obtains its genetic type from the previous
generation, whereas a fraction $\varepsilon$ of the population gets its type from generation $N^\beta$ in the past. The most important difference to previously studied models is the fact that the expected length of a genealogical jump is equal to $1+\varepsilon(N^{\beta}-1),$ and hence diverges as $N\to\infty.$ 
This puts us in a situation that is outside the scope of \cite{KKL} or \cite{BGKS}.

\medskip

Our main result shows that for $0<\beta<1/4,$ after rescaling time by $\varepsilon^2 N^{2\beta+1},$ the ancestral process of a sample from such a population converges to Kingman's $n-$coalescent, showing that the relevant time scale in $N$ is indeed much larger than in the Wright-Fisher model, namely $N^{2\beta+1}$ as opposed to $N.$ Moreover, we show that for any $\beta>0$ the time to the most recent common ancestor of two individuals is always of an order that is strictly greater than $N,$ and we provide some simulations that support the conjecture that $N^{2\beta+1}$ is the relevant time scale also for (at least some) $\beta>1/4.$

\section{Model and main results}
The formal construction of our model follows \cite{KKL, BGKS}. Fix $\beta>0, \varepsilon\in(0,1).$ For each $N\in\N$ let

\begin{equation}\label{eq:mu_N}
\mu_N:=(1-\varepsilon)\delta_1+\varepsilon\delta_{N^{\beta}}.
\end{equation}

In order to simplify notation, we will assume throughout this paper that $N^\beta$ is a natural number, otherwise imagine it replaced by $\lfloor N^\beta\rfloor.$ 
We call the probability measure $\mu_N$ the \emph{seed bank age distribution}. Fix once and for all a reference generation 0, from which time in discrete generations runs backwards. Fix a sample size $m\geq 2$ and a sampling measure $\gamma$ for the generations of the original sample on the integers $\N.$ We will usually assume that $\gamma$ has finite support (independent of $N$), an important example being $\gamma=\delta_0.$ The ancestral lineages of $m$ sampled individuals indexed by $i\in\{1,...,m\}$ in the seed bank process, who lived in generations sampled according to $\gamma$ with respect to reference time 0, are constructed as follows.  For each $i\in\{1,...,m\},$ let $(S^{(i)}_n)_{n\in\N}$ be a Markov chain independent of $\{(S_n^{(j)})_n, j\neq i\},$ whose state space is the non-negative integers $\N_0,$ with $S_0^{(i)}\sim\gamma,$ and homogeneous transition probabilities 

\[ \P\big(S_1^{(i)}=k'\,\big|\, S_0^{(i)}=k\big)=\mu_N(k'-k),\; 0\leq k<k', \; i=1,...,m.\]

%\begin{figure}[h!]
%\begin{center}
%\includegraphics[scale=0.4]{URN4.jpg}
%\caption{Ancestral line of one individual. The numbers indicate the generations, counted backward from generation 0.}
%\end{center}
%\end{figure}

The interpretation is that $S_0^{(i)}$ represents the generation of individual $i,$ and $S^{(i)}_1$ the generation of its parent (backward in time), and so on. The set $\{S^{(i)}_0, S^{(i)}_1,...\}\subset \N_0$ is thus the set of generations of all ancestors of individual $i,$ including the individual itself. 

\medskip

In order to construct the ancestral process of several individuals, we introduce interaction between ancestral lines as follows. Within the population of size $N,$ in any fixed generation $k,$ the individuals are labeled from 1 to $N.$ Let $(U^{(i)}_n)_{n\in\N}, i\in\{1,...,m\}$ denote $m$ independent families of independent random variables distributed uniformly on $\{1,...,N\},$ independent of $\{(S_n^{(i)}), i=1,...,m\}.$ We think of $U^{(i)}_{S^{(i)}_n}$ as the label within the population of size $N$ of the $n$th ancestor of individual $i.$ This means that the label of each ancestor of each individual is picked uniformly at random in each generation that the ancestral line of this individual visits, exactly as it is done in the Wright-Fisher model. The difference is that an ancestral line in the Wright-Fisher model visits every generation, while in our seed bank model it does not. Note that of course all the random variables introduced up to now depend on the population size $N.$

\medskip

The \emph{time to the most recent common ancestor} of two individuals $i$ and $j$, denoted by $T_{MRCA}(2),$ is defined as

\be T_{MRCA}(2):=\inf\big\{k >0: \exists n,m\in\N, k=S^{(i)}_n=S^{(j)}_m \mbox{ and } U^{(i)}_{k}=U^{(j)}_{k}\big\}.
\ee

In other words, $T_{MRCA}(2)$ is the first generation back in time (counted from 0 on) in which two randomly sampled individuals (`initial' generations sampled according to $\gamma)$ have an ancestor, and both ancestors have the same label $U,$ hence, it is indeed the first generation back in time that $i$ and $j$ have the \emph{same} ancestor.

\medskip

It should be clear how to generalize this construction to lead to a full ancestral process of $m\geq 2$ individuals: Construct the process $(S^{(i)}_n, U^{(i)}_n)_{n\in\N}$ independently for each individual, and couple the lines of individual $i$ and individual $j$ at the time of their most recent common ancestor by letting them evolve together from this time onward, as represented in Figure 1.

\begin{figure}[h!]
\begin{center}
\includegraphics[scale=0.4]{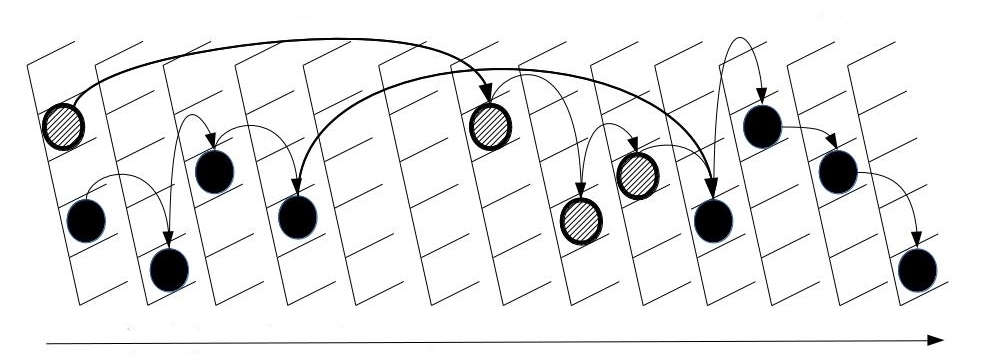}
\caption{Coalescing ancestral lines of two individuals. Backward time is running from left to right. The slots within each generation indicate the different individuals.}
\end{center}
\end{figure}

A precise construction is given in the following way: Let 

\be T_1:=\inf\big\{k>0: \exists n,l\in\N, i\neq j\in\{1,...,m\}: k=S_n^{(i)}=S_l^{(j)}, U_k^{(i)}=U_k^{(j)}\big\},\ee
be the time of the first coalescence of two (or more) lines, and let the set of individuals whose lines participate in a coalescence at time $T_1$ be denoted by

\be I_1:=\big\{i\in\{1,...,m\}: \exists n,l\in\N, j\neq i: T_1=S_n^{(i)}=S_l^{(j)}, U_{T_1}^{(i)}=U_{T_1}^{(j)}\big\}.\ee
$I_1$ can be further divided into (possibly empty) pairwise disjoint sets

\be I_1^p:=\big\{i\in I_1: U_{T_1}^{(i)}=p\big\},\quad p=1,...,N.\ee
Note that by construction there is at least one $p$ such that $I_1^p$ is non-empty, which actually means that any such $I_1^p$ contains at least two elements. Let 
\[i^p_1:=\min I^p_1,\]
and let 
\[J_1:=\bigcup_{p: I_1^p\neq \emptyset}\{i_1^p\}.\]
After time $T_1$ we discard all $S^{(j)}$ for $j\in I^p_1, j\neq i^p_1,$ and only keep $S^{(i^p_1)}$ for every $p=1,...,N.$ We interpret this as merging the ancestral lineages of all individuals from $I_1^p$ into one lineage at time $T_1,$ separately for every $p$ with $I_1^p\neq \emptyset.$ In case there are several non-empty $I_1^p,$ we observe simultaneous mergers. For $r\geq 2$ we define now recursively

\be T_r:=\inf\big\{k>T_{r-1}: \exists n,l\in\N,\exists i, j\in I^c_{r-1}\cup J_{r-1}, i\neq j: k=S_n^{(i)}=S_l^{(j)}, U_k^{(i)}=U_k^{(j)}\big\},\ee
and
\be I_r:=\big\{i\in I^c_{r-1}\cup J_{r-1}: \exists n,l\in\N,\exists j\neq i, j\in I^c_{r-1}\cup J_{r-1}: T_r=S_n^{(i)}=S_l^{(j)}, U_{T_r}^{(i)}=U_{T_r}^{(j)}\big\},\ee
and similarly $I_r^p:=\{i\in I_r: U_{T_r}^{(i)}=p\}, i^p_r=\min I^p_r,  p=1,...,N,$ and $J_r= \cup_{p: I_r^p\neq \emptyset}\{i_r^p\}.$ We stop the recursive construction as soon as $I_r^c=\emptyset,$ which happens after finitely many $r.$ Now we can finally define the main object of interest of this paper.

\begin{definition}\label{def:ancestral} Fix $N\in\N, \beta>0,$ and $\varepsilon >0.$ Fix $m\ll\N$ and an initial distribution $\gamma$ on $\N_0.$ Define a partition valued process $(A_k^N)_{k\in\N_0},$ starting with $A_0^N=\big\{\{1\},...,\{m\}\big\}$, by setting $A^N_k=A^N_{k-1}$ if $k\notin\{T_1, T_2,..\},$ and constructing the $A^N_{T_r}, r=1,2,...$ in the following way: For each $p\in\{1,...,N\}$ such that $I_r^p\neq \emptyset,$ the blocks of $A^N_{T_r-1}$ that contain at least one element of $I^p_r,$ are merged. Such merging is done separately for every $p$ with $I_r^p\neq \emptyset,$ and the other blocks are left unchanged. The resulting process $(A^N_k)_{k\in\N}$ is called the \emph{ancestral process} of $m$ individuals in the Wright-Fisher model with seed bank age distribution $\mu_N$ and initial distribution $\gamma.$ The time to the most recent common ancestor of the $m$ individuals is defined as
\be \label{eq:TMRCAN}  T^N_{MRCA}(m):=\inf\big\{k\in\N: A^N_k=\{1,...,m\}\big\}.\ee
\end{definition}

 It is important to note that $A^N$ is not a Markov process: The probability of a coalescence at time $k$ depends on more than just the configuration $A^N_{k-1}.$ In fact, it depends on the values $\max\{S^{(i)}_n: S^{(i)}_n\leq k-1\}, i=1,...,m,$ that is, on the generation of the last ancestor of each individual before generation $k.$

\medskip

An equivalent construction of $(A^N_k)$ in terms of renewal processes was given in \cite{BGKS}. In the present paper, we denote by 
$P_\gamma$ the law of $(S_n^{(1)})$, indicating the initial distribution of the generations of the individuals. We write $P_{\otimes\gamma^m}$ for the law of the process $(A_k^N)$ if the generation of each of the $m$ sampled individuals is chosen independently according to $\gamma.$ We abbreviate by slight abuse of notation both $P_{\delta_0}$ and $P_{\otimes \delta_0^m}$ by $P_0.$ In the main result below we will assume $\gamma$ to have finite support independent of $N.$ In this case, the fact that the individuals may be sampled from different generations will become negligible after rescaling time appropriately. However, for the construction of $(A_k^N)$ this assumption is not necessary.

\medskip

The aim of this paper is to understand the non-trivial scaling limit of $(A^N_k)$ and the corresponding time-scaling as $N\to\infty.$ We write $\mathcal{M}_1(\N_0)$ for the probability measures on $\N_0.$ We are now ready to state our main result.

\begin{theorem}\label{thm:kingman}
Let $0<\beta<1/4.$ For all $m>0$ and $\gamma\in\mathcal M_1(\N_0)$ with finite support, the process $(A^N_{\lfloor\varepsilon ^2 N^{1+2\beta} t\rfloor})_{t\geq 0}$ converges weakly as $N\to\infty$ on the Skorohod space of c\`adl\`ag paths to Kingman's $m-$coalescent.
\end{theorem}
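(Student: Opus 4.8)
The plan is to reduce the whole statement to estimates on the overlap of the independent renewal sets $R_i:=\{S^{(i)}_0,S^{(i)}_1,\dots\}$ and to the elementary observation that, once the renewal sets are fixed, coalescence is an independent $1/N$-thinning. Indeed, two lines $i,j$ can only coalesce at a generation $k\in R_i\cap R_j$ (a \emph{meeting}), and at such a $k$ the labels $U^{(i)}_k,U^{(j)}_k$ are independent and uniform on $\{1,\dots,N\}$, so they agree with probability $1/N$; moreover, because distinct generations carry independent label families, these matching events are independent across $k$ and independent of the $R_i$. The key renewal input is the local estimate $u(k):=P_0(k\in R_1)\to 1/(\varepsilon N^\beta)$, uniformly for $k$ in the bulk of the relevant range; it follows from writing $S_n=S_0+n+B_n(N^\beta-1)$ with $B_n\sim\mathrm{Bin}(n,\varepsilon)$ and applying a local central limit theorem to $B_n$. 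Hence the meeting intensity is $P_0(k\in R_1\cap R_2)=u(k)^2\to 1/(\varepsilon^2N^{2\beta})$, and the expected number of meetings up to generation $\varepsilon^2N^{1+2\beta}t$ tends to $Nt$; after thinning by $1/N$ this produces an $O(1)$ number of coalescences, which is exactly what fixes the time scale $T:=\varepsilon^2N^{1+2\beta}$.

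First I would treat $m=2$. By the thinning observation,
\[
P_0\big(T_{MRCA}(2)>Tt\big)=\E\Big[(1-1/N)^{\,|R_1\cap R_2\cap[1,Tt]|}\Big].
\]
Writing $M(Tt):=|R_1\cap R_2\cap[1,Tt]|$, I would show $M(Tt)/N\to t$ in probability: the first moment gives $\E[M(Tt)]\sim Nt$ by the previous paragraph, and the second moment is controlled through the renewal two-point function $P_0(k,k'\in R_1)=u(|k-k'|)\,u(k)$, together with the fact that the excess $\sum_{h}\big(u(h)-1/(\varepsilon N^\beta)\big)$ is bounded (a second-order renewal estimate, reflecting that meetings occur in clusters of $O(1/\varepsilon)$ consecutive generations). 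Concentration of $M(Tt)$ then yields $(1-1/N)^{M(Tt)}\to e^{-t}$, i.e.\ $T_{MRCA}(2)/T\Rightarrow\mathrm{Exp}(1)$.

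Next I would pass to general $m$ and identify the limit as Kingman's $m$-coalescent. Disjoint pairs $(i,j)$ and $(k,l)$ use disjoint label families and independent renewal sets, hence coalesce independently; pairs sharing one line decorrelate because the corresponding covariance of meeting counts is of smaller order than the variances. Simultaneous and multiple mergers are negligible: three lines share a generation with probability $\sim N^{-3\beta}$ per generation, so even a double label match over the horizon $T$ costs a factor $N^{-1-\beta}\to0$; thus only binary mergers survive, each at rate $1$ in rescaled time, which is precisely Kingman. Since after a merger the retained line continues as a fresh, independent renewal (strong Markov property), the pairwise rates remain $1$ throughout, and the finite-support assumption on $\gamma$ makes the initial spread of sampling generations negligible after rescaling. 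The non-Markovianity of $(A^N_k)$ is handled by working directly with the ordered coalescence times and merge pattern (equivalently, by augmenting the state with the $O(N^\beta)$ gaps to the last ancestor, which are negligible on the scale $T=\varepsilon^2N^{1+2\beta}$); convergence of the finite-dimensional distributions of these times, together with tightness on Skorohod space, gives the claimed weak convergence.

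The hard part is the quantitative renewal analysis feeding the Poisson/thinning approximation. One must control $u(k)$ and the two-point function \emph{uniformly} over the whole horizon $k\le T$, including the transient region $k\lesssim\varepsilon^2N^{3\beta}$ where the renewal density has not equilibrated and the lattice-scale revivals near multiples of $N^\beta$ produce oscillations, and then upgrade the heuristic thinning into a genuine Poisson approximation (e.g.\ via a Chen--Stein bound on the coalescence count) with error tending to zero, while also proving tightness. It is in pushing these error bounds through that the restriction $0<\beta<1/4$ enters; as the simulations discussed in the introduction suggest, the scale $N^{1+2\beta}$ is expected to be correct well beyond this range, so the bound $1/4$ should be an artifact of the method rather than a genuine threshold.
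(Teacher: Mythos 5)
Your proposal is correct in its overall architecture, but it follows a genuinely different route from the paper's proof, so it is worth comparing the two. The shared core is the observation that, conditionally on the renewal sets, coalescence is an independent $1/N$-thinning of the meeting set $R_1\cap R_2$; this is exactly the content of the paper's Lemma \ref{lem:change}, read in the opposite direction (the paper fixes the label-matching times $\tau_k$, an independent geometric sequence, and asks when both lines have a renewal there, whereas you fix the renewal sets and thin them by the label matches). From there the methods diverge. The paper replaces each ancestral line by the auxiliary urn chain of Section \ref{sect:urn} and proves, via a Doeblin coupling and random-walk exit-time bounds, that this chain mixes in time $O(N^{3\beta+\delta})$; the key Lemma \ref{lem:total_variation}(ii) then states that at an independent ${\rm Geo}(1/N)$ attempted-coalescence time the chain is within $N^{-(\beta+\delta)}$ of $\nu_N$, uniformly in the initial law, which forces $3\beta<\lambda<1-\beta$ and is the precise point where $\beta<1/4$ enters; consequently $\kappa$ is asymptotically geometric with parameter $\nu_N(0)^2$, $T^N_{MRCA}(2)$ becomes a geometric sum of geometrics sandwiched by stochastic domination, and the uniformity in the initial distribution restarts the argument after each merger. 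You instead keep the renewal description and derive the exponential limit from the exact identity $P_0\big(T_{MRCA}(2)>Tt\big)=\E\big[(1-1/N)^{M(Tt)}\big]$ plus concentration $M(Tt)/N\to t$; your moment scheme is sound (the first moment needs the uniform bulk estimate $u(k)\to(\varepsilon N^\beta)^{-1}$, the variance is controlled because the transient, lattice-revival part of the two-point function and the $O(1/\varepsilon)$ clustering excess contribute at lower order), and your count of triple meetings (order $N^{1-\beta}$ of them, each costing an extra factor $1/N$ for a multiple merger) is correct. What your route buys: it avoids mixing times altogether, and nothing in the moment bounds visibly requires $\beta<1/4$, so it plausibly reaches the regimes $\beta=1/3,\,1/2$ that the paper can only address by simulation; your remark that the $1/4$ threshold arises from your own error bounds is the one inaccurate statement, since that threshold is an artifact of the paper's mixing argument rather than of your scheme. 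What it costs: the uniform local-CLT control of $u(k)$ and of the two-point function over the whole horizon, and the Chen--Stein and tightness bookkeeping for general $m$, constitute the real technical content, and your proposal names them as ``the hard part'' but defers them --- this is precisely the work that the paper's Lemma \ref{lem:total_variation} carries out by coupling. So as written you have a sound, arguably more far-reaching strategy, but not yet a complete proof.
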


This result should be compared to the classical result for the ancestral process of the Wright-Fisher model: In that case, convergence to Kingman's coalescent occurs on the time scale $N$ corresponding to the population size. Our result shows that the seed bank effect drastically changes the time scale on which coalescences occur. The intuitive reason for this is that each generation is visited with probability $\approx\varepsilon^{-1}N^{-\beta}$ by the ancestral line of any given individual, and whenever two ancestral lines visit the same generation, they merge with probability $1/N.$ Making this intuition precise requires some work, which is carried out in the next two sections of this paper. As a direct consequence of this theorem, the time to the most recent common ancestor of $m$ individuals is of strictly larger order than in the Wright-Fisher model.

\begin{corollary}%
\label{cor:scaledxptime}%
Let $0<\beta<1/4.$ For all $m>0$ and $\gamma\in\mathcal M_1(\N_0)$ with finite support,
\be \lim_{N\to\infty} \frac{E_{\otimes\gamma^m} [T^N_{MRCA}(m)]}{\varepsilon^2 N^{1+2\beta}}=2\Big(1-\frac{1}{m}\Big).\ee
\end{corollary}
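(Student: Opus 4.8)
The plan is to deduce the corollary from Theorem~\ref{thm:kingman} by combining weak convergence with a uniform integrability argument. Write $X^N_t := A^N_{\lfloor \varepsilon^2 N^{1+2\beta} t\rfloor}$ for the rescaled ancestral process, which by Theorem~\ref{thm:kingman} converges weakly on Skorohod space to Kingman's $m$-coalescent $(K_t)_{t\geq 0}$. Let $\tau^N := T^N_{MRCA}(m)/(\varepsilon^2 N^{1+2\beta})$ denote the rescaled absorption time, and let $\tau$ be the absorption time of $K$, i.e.\ the first time $K_t = \{1,\dots,m\}$. The target then reads $\lim_{N\to\infty} E_{\otimes\gamma^m}[\tau^N] = E[\tau]$, so the corollary splits into two tasks: first, identify $E[\tau] = 2(1-1/m)$ for Kingman's $m$-coalescent; second, justify passing expectations through the limit.

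First I would record the value $E[\tau] = 2(1-1/m)$. In Kingman's $m$-coalescent, while $k$ blocks remain the holding time is exponential with rate $\binom{k}{2}$, and these successive holding times are independent. Hence
\begin{equation}
E[\tau] = \sum_{k=2}^{m} \frac{1}{\binom{k}{2}} = \sum_{k=2}^{m} \frac{2}{k(k-1)} = 2\sum_{k=2}^{m}\Big(\frac{1}{k-1}-\frac{1}{k}\Big) = 2\Big(1-\frac{1}{m}\Big),
\end{equation}
using the telescoping of the partial fractions. This is the classical computation and requires nothing beyond the limit object itself.

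Next I would transfer this to $\tau^N$. Weak convergence of $X^N \Rightarrow K$ on Skorohod space gives $\tau^N \Rightarrow \tau$ in distribution, since the absorption time is an almost surely continuous functional of the path at the limit (the $m$-coalescent reaches $\{1,\dots,m\}$ by a jump and stays there, so $\tau$ is a continuity point of the hitting-time map for $K$-a.e.\ path). Convergence in distribution of $\tau^N$ to $\tau$ does not by itself give convergence of expectations, so the essential analytic input is a uniform integrability bound: it suffices to show $\sup_N E_{\otimes\gamma^m}[(\tau^N)^{1+\delta}] < \infty$ for some $\delta>0$, or more simply a uniform exponential-type tail $P_{\otimes\gamma^m}(\tau^N > t) \leq C e^{-ct}$ with $C,c$ independent of $N$. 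Such a bound should follow from the same coalescence-rate estimates already developed to prove Theorem~\ref{thm:kingman}: once only a bounded number of lineages remain, the per-unit-rescaled-time probability of a merger is bounded below uniformly in $N$, giving geometric tails for the number of rescaled time units needed to reach a common ancestor.

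The main obstacle will be establishing this uniform integrability, and it is genuinely the only non-routine step. The delicate point is that $A^N$ is \emph{not} Markov---the merger probability at time $k$ depends on the positions of the last ancestors $\max\{S^{(i)}_n : S^{(i)}_n \leq k-1\}$ of all active lineages---so one cannot simply invoke a Markov hitting-time bound. I would control this by showing that, uniformly over admissible ancestor configurations and over $N$ (in the range $0<\beta<1/4$), the conditional probability that two fixed active lineages coalesce within the next $\varepsilon^2 N^{1+2\beta}$ generations is bounded away from $0$; iterating over successive blocks of that length yields the desired uniform geometric tail for $\tau^N$. Given the rate estimates used in the proof of Theorem~\ref{thm:kingman}, this reduces to checking that those lower bounds are uniform rather than merely asymptotic, after which Vitali's theorem (convergence in distribution plus uniform integrability) delivers $E_{\otimes\gamma^m}[\tau^N]\to E[\tau]$ and completes the proof.
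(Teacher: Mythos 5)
Your proposal is correct and follows the same top-level route as the paper, namely deducing the corollary from Theorem~\ref{thm:kingman} together with the classical value $2(1-1/m)$ for the expected absorption time of Kingman's $m$-coalescent; but you are considerably more careful than the paper, whose entire proof is the single sentence that the result ``follows from Theorem~\ref{thm:kingman}.'' The point you isolate --- that weak convergence yields convergence in distribution of the rescaled $T^N_{MRCA}(m)$ but not, by itself, convergence of its expectation --- is left completely unaddressed in the paper, so your uniform integrability step is exactly the missing ingredient. Moreover, your sketch for obtaining it is realizable with what the paper already proves: in the proof of Theorem~\ref{thm:kingman}, $T^N_{MRCA}(2)$ is stochastically sandwiched between geometric random variables with parameters $N^{-1}\nu_N(0)^2(1\pm c)\sim (1\pm c)/(\varepsilon^2N^{1+2\beta})$, and this domination holds uniformly over initial configurations of the active lineages because Lemma~\ref{lem:total_variation} is uniform over initial laws $\mu\in\mathcal P_{N^\beta}$; for $m$ lines the absorption time is bounded by a sum of $m-1$ inter-coalescence times, each conditionally dominated by such a geometric, so the rescaled time $\tau^N$ satisfies $\sup_{N\geq N_0}E_{\otimes\gamma^m}[(\tau^N)^2]<\infty$ (e.g.\ via Minkowski), which is precisely the uniform integrability you need before applying Vitali's theorem. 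One simplification worth noting: you do not need any continuity property of the absorption-time functional on Skorohod space, which is the most delicate point in your second step. Since the one-block partition is absorbing both for $A^N$ and for the limit, one has $\{\tau^N>t\}=\{A^N_{\lfloor \varepsilon^2N^{1+2\beta}t\rfloor}\neq\{1,\dots,m\}\}$, so convergence of the one-dimensional marginals at fixed times $t$ (which are a.s.\ continuity points of the limit path) already gives $P_{\otimes\gamma^m}(\tau^N>t)\to P(\tau>t)$ for every $t>0$, hence $\tau^N\Rightarrow\tau$.
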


\begin{proof}
Recalling that for Kingman's $m-$coalescent the expected time to the most recent common ancestor is $2(1-1/m),$ this result follows from Theorem \ref{thm:kingman}.
\end{proof}

As $\beta$ increases above $1/4,$ one expects the seed bank effect to become even more pronounced. However, our methods of proof, relying on mixing time arguments, don't yet allow us to extend the result to arbitrary $\beta>0.$ It is nevertheless not difficult to see that in any case, the expected time to the most recent common ancestor is of course of order greater than $N$ for any choice of $\beta>0,$ and that $T^N_{MRCA}$ is of order at least $N^{2\beta+1}$ with probability tending to 1 if $0<\beta<1/3.$

\begin{proposition}\label{thm:mrca} Fix $m\in\N,$ and $\gamma\in\mathcal M_1(\N_0)$ with finite support.
\begin{itemize}
\item[(i)] Let $\beta>0.$ For all $N\in\N$ large enough,
\be
E_{\otimes\gamma^m}\big[T^N_{MRCA}(m)\big]\geq \varepsilon  N^{1+\beta}\vee N.
\ee
\item[(ii)]Let $0<\beta<1/3.$ For all $\delta>0,$
\be \lim_{N\to\infty}P_{\otimes \gamma^m}\big(T^N_{MRCA}(m)<\varepsilon^2 N^{1+2\beta-\delta}\big)=0.\ee
\end{itemize}
\end{proposition}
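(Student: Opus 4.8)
The plan is to bound the time to the most recent common ancestor from below by controlling how often ancestral lines can even \emph{meet} in the same generation, which is a necessary precondition for any coalescence. The key observation is that each ancestral line visits only a sparse subset of the generations: starting from the origin, the line $(S^{(i)}_n)_n$ moves forward by increments that are either $1$ (with probability $1-\varepsilon$) or $N^\beta$ (with probability $\varepsilon$), so by the law of large numbers the typical increment is $1+\varepsilon(N^\beta-1)\approx \varepsilon N^\beta$, and hence the fraction of generations in $[0,k]$ that are visited by a single line is of order $(\varepsilon N^\beta)^{-1}$. For two independent lines, the probability that they are \emph{both} present in a given generation is of order $(\varepsilon N^\beta)^{-2}$, and conditional on both being present they coalesce with probability $1/N$. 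Multiplying, the coalescence rate per generation is of order $\varepsilon^{-2}N^{-2\beta-1}$, which immediately suggests the time scale $\varepsilon^2 N^{1+2\beta}$ and gives both parts of the proposition.

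For part (i) I would argue directly via the expectation. Fix two of the $m$ individuals; clearly $T^N_{MRCA}(m)\ge T_{MRCA}(2)$ for this pair is false in general, so instead I would bound the expectation from below by the expected time until the \emph{first} meeting (in the same generation) of any pair of lines, since no coalescence can occur before two lines share a generation. The first meeting requires a line to jump \emph{onto} a generation already occupied by another line; since jumps of size $1$ land on the immediately preceding generation and jumps of size $N^\beta$ are needed to reach far-apart generations, one can show that the number of generations back one must go before a meeting becomes likely is at least of order $\varepsilon N^{1+\beta}$ (the $\varepsilon N^{\beta}$ factor being the typical jump length, and the extra $N$ coming from the $1/N$ label-matching requirement), while trivially it is at least $N$ from the classical Wright--Fisher mechanism acting through the $\delta_1$ part of $\mu_N$. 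Taking the maximum yields the stated bound $\varepsilon N^{1+\beta}\vee N$. I would make this precise by a first-moment (Markov-type) estimate on the expected number of coalescence-eligible events in $[0,k]$.

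For part (ii) I would use a first-moment bound on the probability of any coalescence before time $t_N:=\varepsilon^2 N^{1+2\beta-\delta}$. Let $C$ denote the total number of triples (generation $k\le t_N$, pair of lines $i\ne j$) for which both lines visit $k$ and their labels agree, so that $\{T^N_{MRCA}<t_N\}\subseteq\{C\ge 1\}$ and $P(T^N_{MRCA}<t_N)\le \E[C]$. The expectation factorizes over the $\binom{m}{2}$ pairs, the sum over generations $k\le t_N$, the probability $\approx(\varepsilon N^\beta)^{-2}$ that both lines occupy $k$, and the factor $1/N$ for label matching. This produces $\E[C]\lesssim \binom{m}{2}\, t_N\,(\varepsilon N^\beta)^{-2}\,N^{-1}=\binom{m}{2}\,\varepsilon^{-2}N^{-1-2\beta}\,t_N = \binom{m}{2}\,N^{-\delta}\to 0$, giving the result. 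The restriction $\beta<1/3$ enters because the heuristic visit probability $(\varepsilon N^\beta)^{-1}$ per line must be justified uniformly over the relevant range of generations, and controlling the local renewal density of $(S_n^{(i)})$ — in particular ruling out anomalously dense clustering of visited generations — becomes delicate when $\beta$ is large relative to the horizon $t_N$.

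The main obstacle I anticipate is making rigorous the claim that the probability a fixed generation $k$ is visited by a single line is of the right order $(\varepsilon N^\beta)^{-1}$, uniformly in $k$ up to the horizon. This is a renewal-theoretic statement about the Markov chain $(S^{(i)}_n)$ whose step distribution is $\mu_N$ and depends on $N$; one cannot simply invoke the classical renewal theorem since the step size $N^\beta$ and the time horizon both grow with $N$. I would handle this by estimating the renewal measure $u_N(k):=\sum_n P(S^{(i)}_n=k)$ directly, showing $u_N(k)$ is bounded above by a constant multiple of $(\varepsilon N^\beta)^{-1}$ for all $k$ in the relevant range — for instance via a generating-function or Fourier computation for the lattice renewal sequence, or by a coupling/second-moment argument bounding the number of visits of $(S_n)$ to any window. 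This uniform renewal density bound is the technical heart of both parts, and the constraint $\beta<1/3$ in (ii) is exactly what guarantees it holds with the required precision over $[0,t_N]$.
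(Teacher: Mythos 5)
Your strategy (a first-moment bound on the number of coalescence-eligible events, driven by a renewal-density estimate for the set of generations visited by a single line) is genuinely different from the paper's proof, which works entirely with the auxiliary urn process of Section \ref{sect:urn}: part (i) is proved there by an excursion decomposition (coalescence can only occur while the urn sits at $0$; the number of excursions before coalescence dominates a Geometric$(1/(\varepsilon N))$ variable, and each excursion lasts at least $N^\beta$ generations), and part (ii) by combining the mixing-time estimate of Lemma \ref{lem:total_variation} with the representation $T_{MRCA}(2)\stackrel{d}{=}\tau_\kappa$ of Lemma \ref{lem:change}. Your skeleton could in principle deliver both parts, and would even explain transparently why the answer is $\varepsilon^2N^{1+2\beta}$; but as written it rests on a claim that is false.

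The claim you yourself identify as the technical heart --- that $u_N(k):=P_\gamma(\exists n: S_n^{(i)}=k)$ is bounded by a constant multiple of $(\varepsilon N^\beta)^{-1}$ uniformly over the relevant range of $k$ --- fails badly in the early, pre-equilibrium window. First, near the origin: with $\gamma=\delta_0$ one has $u_N(k)=(1-\varepsilon)^k$ for $0\le k<N^\beta$, which is close to $1$ for small $k$. More seriously, for $k$ of the form $bN^\beta+a$ with $b$ small and $a$ near the typical number $b(1-\varepsilon)/\varepsilon$ of unit steps, a local limit computation for the negative binomial number of unit steps gives $u_N(k)$ of order $\varepsilon/\sqrt{b}$: the renewal density consists of narrow spikes whose height exceeds $(\varepsilon N^\beta)^{-1}$ until $k$ is of order $N^{3\beta}$. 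This is exactly the mixing-time phenomenon that Lemma \ref{lem:total_variation} quantifies (the urn chain needs time $N^{3\beta+\delta}$ to equilibrate), and it is also where the hypothesis $\beta<1/3$ earns its keep in the paper's proof of (ii): the mixing window $N^{3\beta+\delta'}$ must be negligible compared with the scale $N$ of the first label-matching attempt $\tau_1$. Your sum $\sum_{k\le t_N}u_N(k)^2/N$ does in fact remain small --- the spikes are narrow, and their integrated contribution is of order $\varepsilon^{-1}\sqrt{t_N N^{-\beta}}/N\to 0$ --- so a corrected, non-uniform version of your density estimate (splitting off the pre-equilibrium window) would rescue the argument; but the lemma you propose to prove is not that statement, and proving the uniform bound is impossible. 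A smaller point: your parenthetical assertion that $T^N_{MRCA}(m)\ge T_{MRCA}(2)$ ``is false in general'' is mistaken; it holds almost surely in the coupled construction, since a common ancestor of all $m$ individuals is in particular a common ancestor of any fixed pair, and the paper uses precisely this reduction to the case $m=2$.
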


We will prove these results in Section 4. In the next section, we give an alternative construction of the model in terms of an auxiliary 
Markov process that will be useful in the proof.

\medskip

In Section 5 we present some simulations of $T^N_{MRCA}(2)$ for certain choices of $\varepsilon$ and $\beta,$ where both lines are sampled from the same generation. The simulations show that also for $\beta=1/3$ and $\beta=1/2,$ the empirical distribution of $T^N_{MRCA}(2),$ scaled by a factor $\varepsilon^2 N^{2\beta+1},$ exhibits a very good fit to an exponential random variable with parameter 1.

\section{Construction of auxiliary processes}\label{sect:urn}

In \cite{KKL} and \cite{BGKS}, an auxiliary urn Markov process plays a crucial role. We present this process now in a set-up that 
is useful for this paper, and derive some properties that will serve us in the proof of our main results.

\medskip

Fix $N, \varepsilon, \beta$ and $\mu_N$ as in the previous section. Fix a probability measure $\gamma$ on $\N$ with finite support, and assume that $N$ is large enough such that $\mbox{supp}(\gamma)\subseteq \{0,...,N^{\beta}\}.$ Let $\P_{\gamma}$ be the law of a Markov chain $(X_k)_{k\in\N_0}$ on $\{0,...,N^{\beta}-1\}$ with initial distribution $\gamma$ that moves according to the following rules: For $k\geq 0,$ depending on the current state $X_k,$ we have transitions

\be \label{eq:transitions-urn} \quad X_k\mapsto \begin{cases}
                  0&\mbox{with probability }(1-\varepsilon)1_{\{X_k=0\}},\\
                  N^\beta-1&\mbox{with probability }\varepsilon 1_{\{X_k=0\}},\\
                  X_{k-1}&\mbox{with probability }1_{\{X_k\in \{1,...,N^\beta-1\}\}}.
                 \end{cases}\ee
As in \cite{KKL}, we call this process an \emph{urn process}, because we think of $X_k$ as the position (urn) of a ball that is moved among $N^\beta$ urns. Figure 2 shows the possible jumps of $(X_k).$

\begin{figure}[h!]
\begin{center}
\includegraphics[scale=0.4]{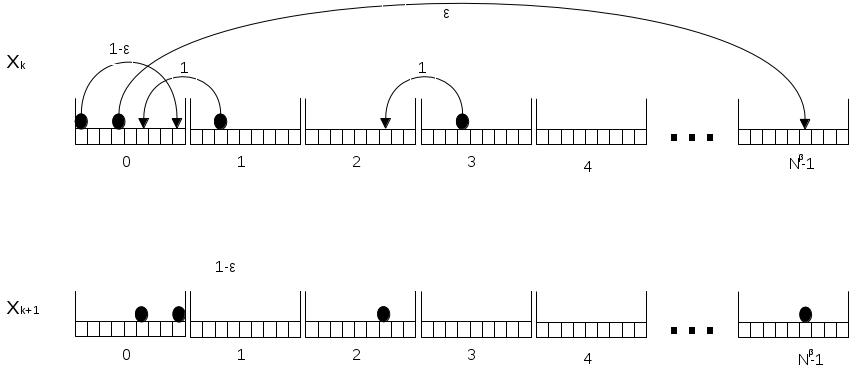}
\caption{The possible jumps of $X_k:$ From urn 0 it stays in 0 with probability $1-\varepsilon,$ jumps from urn 0 to urn $N^\beta-1$ with probability $\varepsilon,$ and deterministically downwards to the previous urn from any other urn.}
\end{center}
\end{figure}

How does this new process connect to the original processes $(S^{(i)}_k)$ resp. our ancestral process $(A_k^N)$? We can 
couple $X$ and $S^{(i)}$ such that the successive times that $X$ visits urn 0 are exactly the successive values visited by the process 
$S^{(i)},$ that is, the generations in which individual $i$ has an ancestor. This coupling is achieved as follows: Define 
\be \label{eq:Mproc}M_0:=\inf\{k\geq 0: X_k=0\},\quad M_{n}=\inf\{k> M_{n-1}: X_k=0\},\; n\geq 1.\ee
Then we have

\begin{lemma}\label{lem:urn-ancestral}
Let $(X_k)$ be the above urn process with initial distribution $\gamma$ on $\{0,..., N^{\beta}-1\},$ and let $(M_n)_{n\in\N_0}$ be defined as in \eqref{eq:Mproc}. Then the process $(\tilde{S}_n)_{n\in\N_0}$ defined by 
\be \label{eq:urn-ancestral}\tilde{S_0}:=M_0,\quad \tilde{S}_n:=\tilde{S}_{n-1}+M_n\ee
has the same distribution as $(S_n^{(i)})$ started in $\gamma,$ and for all $k\in\N,$

\be \label{eq:prob_urn1} P_{\gamma}(\exists n:S_n^{(i)}=k)=\P_{\gamma}(X_k=0).\ee
\end{lemma}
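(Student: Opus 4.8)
The plan is to construct an explicit coupling between the urn process $(X_k)$ and the ancestral jump process $(S_n^{(i)})$, and then read off both assertions from that coupling. The key observation is that the urn process encodes the waiting times of $(S_n^{(i)})$ directly: whenever the ball sits at urn $0$, it either stays (with probability $1-\varepsilon$, corresponding to a short jump of length $1$ in $\mu_N$) or is sent to urn $N^\beta-1$ (with probability $\varepsilon$, corresponding to a long jump of length $N^\beta$), after which it marches deterministically downward $N^\beta-1$ steps back to $0$. Thus the number of time steps between successive visits of $(X_k)$ to urn $0$ is exactly $1$ with probability $1-\varepsilon$ and exactly $N^\beta$ with probability $\varepsilon$, independently across visits by the Markov property at the regeneration times. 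These are precisely the increments of $(S_n^{(i)})$ under $\mu_N$.

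First I would verify that the variables $M_n$ defined in \eqref{eq:Mproc} are, after the initial delay $M_0$, i.i.d.\ with $M_{n+1}-M_n$ (or rather the gap measured from the $n$th visit) distributed according to $\mu_N$, i.e.\ taking value $1$ with probability $1-\varepsilon$ and $N^\beta$ with probability $\varepsilon$. This follows from the strong Markov property applied at each hitting time of urn $0$: conditioned on being at $0$, the next excursion length is independent of the past and has law $\mu_N$. Here one must be slightly careful with the indexing in \eqref{eq:urn-ancestral}: the intended reading is that $\tilde S_0=M_0$ is the first generation in which the lineage (started according to $\gamma$) has an ancestor, and the successive gaps $\tilde S_n-\tilde S_{n-1}$ between consecutive ancestral generations are the i.i.d.\ excursion lengths. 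Matching the initial condition requires checking that starting $(X_k)$ from $\gamma$ on $\{0,\dots,N^\beta-1\}$ produces the correct law for $\tilde S_0=M_0$, which reflects the fact that an individual sampled at generation $s\in\mathrm{supp}(\gamma)$ first ``sees'' urn $0$ after the appropriate deterministic descent; this is exactly why $\gamma$ is assumed supported in $\{0,\dots,N^\beta\}$. Granting this, $(\tilde S_n)$ and $(S_n^{(i)})$ have identical transition mechanism and identical initial law, hence the same distribution, which is the first claim.

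For the identity \eqref{eq:prob_urn1}, I would argue that under the coupling the event $\{\exists n: S_n^{(i)}=k\}$ that the lineage has an ancestor in generation $k$ corresponds, step by step, to the event $\{X_k=0\}$ that the ball occupies urn $0$ at time $k$. Indeed, by the coupling the set of generations visited by $(S_n^{(i)})$ is exactly the set of times at which $(X_k)$ returns to urn $0$, namely $\{M_0, M_1, M_2,\dots\}$, so $\exists n: S_n^{(i)}=k$ if and only if $X_k=0$. Taking probabilities on both sides under $P_\gamma$ and $\P_\gamma$ respectively gives \eqref{eq:prob_urn1}. The main obstacle, and the point deserving the most care, is the bookkeeping at the initial step: one must confirm that the law of the first visit time $M_0$ under the urn dynamics started from $\gamma$ coincides with the law of the first ancestral generation $S_0^{(i)}$ under $\gamma$, so that the two processes are genuinely equal in distribution and not merely equal after a shift. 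Once the excursion decomposition and this initial matching are in hand, both statements follow without further computation.
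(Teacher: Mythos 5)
Your proof is correct and is essentially the paper's own argument: the paper disposes of this lemma with ``Immediate by construction,'' and your excursion decomposition (strong Markov property at the hitting times of urn $0$, gaps of length $1$ or $N^\beta$ with probabilities $1-\varepsilon$ and $\varepsilon$, i.e.\ law $\mu_N$, and the initial matching $M_0=X_0\sim\gamma$ via the deterministic descent) is precisely that construction made explicit. You also correctly resolved the indexing glitch between \eqref{eq:Mproc} and \eqref{eq:urn-ancestral} --- read literally they would give $\tilde S_n=M_0+\cdots+M_n$ --- by interpreting the $\tilde S_n$ as the successive visit times of urn $0$ (equivalently, the gaps as the i.i.d.\ increments), which is the intended reading and is what makes both claims, including \eqref{eq:prob_urn1}, immediate.
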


\begin{proof}
Immediate by construction.
\end{proof}

Note that, conversely, given $S_n=\tilde{S}_n, n\in\N_0,$ equation \eqref{eq:urn-ancestral} uniquely determines $M_n, n\in\N_0.$ If $N$ is large enough such that the distribution $\gamma$ of $S_0$ satisfies supp$(\gamma)\subseteq\{0,..., N^\beta-1\},$ then it is a possible initial distribution for $(X_k).$ Further, since the urn  process $(X_k)$ is determined by the successive times it visits urn 1, Lemma \ref{lem:urn-ancestral} yields a one-to-one correspondence between ancestral lines in the seed bank process, and the above urn process.  As in the construction of $(A_k^N),$ for the process $(X_k)$ we need not assume that $\gamma$ has finite support independent of $N,$ indeed it will actually be useful to allow for initial distributions $\gamma=\gamma_N$ with $\mbox{supp}(\gamma_N)=\{0,...,N^\beta-1\}$ depending on $N,$ for each $N\in\N.$ 

\medskip

Let $E_{\mu_N}$ denote the expectation of $\mu_N,$ i.e.\ $E_{\mu_N}=1+\varepsilon(N^\beta-1).$

\begin{lemma}\label{lem:stat}
The probability measure $\nu_N$ on $\{0,...,N^\beta-1\}$ defined by
 \begin{equation}\label{def:nu}\nu_N(k):=\frac{\mu_N\big(\{k+1,...\}\big)}{E_{\mu_N}}, \quad k=0,...,N^\beta-1,\end{equation}
 is the unique stationary distribution of the urn process $(X_k).$
\end{lemma}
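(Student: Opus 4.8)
The plan is to establish the two claims separately: that $\nu_N$ is stationary, which I would do by checking the global balance equations directly against the kernel \eqref{eq:transitions-urn}, and that it is the \emph{unique} stationary distribution, which I would deduce from irreducibility of the finite chain.

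First I would record the explicit form of $\nu_N$. Because $\mu_N=(1-\varepsilon)\delta_1+\varepsilon\delta_{N^\beta}$, the tail masses are $\mu_N(\{1,2,\dots\})=1$ and $\mu_N(\{k+1,k+2,\dots\})=\varepsilon$ for every $k\in\{1,\dots,N^\beta-1\}$, so that
\[ \nu_N(0)=\frac{1}{E_{\mu_N}},\qquad \nu_N(k)=\frac{\varepsilon}{E_{\mu_N}}\quad\text{for } k\in\{1,\dots,N^\beta-1\}. \]
The kernel admits only three kinds of transitions: from $0$ the ball stays at $0$ with probability $1-\varepsilon$ and jumps to $N^\beta-1$ with probability $\varepsilon$, while from any $j\in\{1,\dots,N^\beta-1\}$ it moves deterministically to $j-1$. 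Hence there are exactly three balance equations to verify. At an interior state $k\in\{1,\dots,N^\beta-2\}$ the only inflow is from $k+1$, so I need $\nu_N(k)=\nu_N(k+1)$, which holds since both equal $\varepsilon/E_{\mu_N}$. At state $0$ the inflow comes from the self-loop and from state $1$, giving the equation $(1-\varepsilon)\nu_N(0)+\nu_N(1)=\nu_N(0)$, i.e.\ $\nu_N(1)=\varepsilon\,\nu_N(0)$, which again holds. At the top state $N^\beta-1$ the only inflow is the $\varepsilon$-jump out of $0$, so I need $\varepsilon\,\nu_N(0)=\nu_N(N^\beta-1)$, which holds as well. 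Finally $\nu_N$ is a probability measure, since $\nu_N(0)+(N^\beta-1)\varepsilon/E_{\mu_N}=(1+\varepsilon(N^\beta-1))/E_{\mu_N}=1$.

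For uniqueness I would note that $(X_k)$ is a Markov chain on the finite space $\{0,\dots,N^\beta-1\}$ that is irreducible: from any state the ball descends deterministically to $0$ in finitely many steps, and from $0$ it reaches $N^\beta-1$ with positive probability $\varepsilon$ and, descending once more, every state in between. A finite irreducible Markov chain has a unique stationary distribution, so $\nu_N$ is the only one. There is no serious obstacle here; the computation is routine, and the only points demanding a little care are the two boundary balance equations at $0$ and $N^\beta-1$, where the inflow structure departs from the generic single-predecessor pattern, together with the (easy) check that the deterministic downward dynamics away from $0$ do not destroy irreducibility.
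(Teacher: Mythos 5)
Your proof is correct, but it proceeds differently from the paper: the paper does not verify anything directly, it simply invokes Lemma 1 of \cite{KKL}, where the stationary distribution of the urn process associated with a general seed bank age distribution $\mu$ of finite support is identified (modulo a shift of the state space by one). Your argument is instead a self-contained, elementary verification: you exploit the special two-atom structure of $\mu_N$ to compute $\nu_N$ explicitly, namely $\nu_N(0)=1/E_{\mu_N}$ and $\nu_N(k)=\varepsilon/E_{\mu_N}$ for $k\in\{1,\dots,N^\beta-1\}$, reduce stationarity to three balance equations (interior states, state $0$, and state $N^\beta-1$), and settle uniqueness by irreducibility of the finite chain, which follows from the deterministic descent to $0$ and the $\varepsilon$-jump from $0$ to the top state. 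All of these checks are accurate, including the normalization $\nu_N(0)+(N^\beta-1)\varepsilon/E_{\mu_N}=1$ and the identification of the inflow structure at the two boundary states. What your route buys is transparency and independence from the reference; it also makes visible why $\nu_N$ has the stated renewal-type form. What the paper's route buys is brevity and generality: the cited lemma covers arbitrary finitely supported $\mu$, of which \eqref{eq:mu_N} is a special case, so no computation specific to the two-atom measure is needed.
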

\begin{proof}
This is a special case of Lemma 1 in \cite{KKL} (note that there the urn process is shifted by one, i.e.\ takes values in $\{1,...,m\}$ for some $m$ corresponding to our $N^\beta,$ instead of $\{0,...,m-1\}).$
\end{proof}

In view of Lemma \ref{lem:urn-ancestral}, an important quantity in this paper will be the probability that $X_k=0,$ which under stationarity is equal to

\be \nu_N(0)=\frac{1}{1+\varepsilon(N^\beta-1)}\sim \frac{1}{\varepsilon N^\beta} \;\mbox{ as }N\to\infty.\ee

The following lemma will be crucial in the proof of our main results. By $\|\cdot \|_{TV}$ we denote the total variation distance, which for two probability measures $\mu,\nu$ on a measureable space $(\Omega, \mathcal F)$ is given by
\[\|\mu-\nu\|_{TV}=\sup_{A\in\mathcal F}|\mu(A)-\nu(A)|.\]

\begin{lemma}\label{lem:total_variation}
Let $(X_k)$ be the urn process and $\nu_N$ its stationary distribution. For $\beta>0$ let $\mathcal{P}_{N^\beta}$ denote the set of probability measures on $\{0,...,N^\beta-1\}.$
\begin{itemize} 
\item[(i)]For all $\lambda>3\beta>0,$ there exist $\delta>0$ and $ N_0\in\N,$ such that for all $N\geq N_0$
\[\sup_{\mu\in\mathcal{P}_{N^\beta}}\|\P_{\mu}(X_{N^{\lambda}}\in \cdot\,)-\nu_N\|_{TV}\leq e^{-N^\delta}.\]
\item[(ii)] Let $\tau=\tau(N)$ be a geometric random variable with parameter $1/N$ independent of $(X_k).$ If $0<\beta<1/4,$ then there exists $\delta>0$ and $ N_0\in\N$ such that for all $N\geq N_0$
\[\sup_{\mu\in\mathcal{P}_{N^\beta}}\|\P_{\mu}(X_\tau\in \cdot\,)-\nu_N\|_{TV}\leq N^{-(\beta+\delta)}.\]
\end{itemize}
\end{lemma}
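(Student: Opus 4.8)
The plan is to prove both parts by coupling two copies of the urn process and controlling the coupling time. Write $d_\mu(t):=\|\P_\mu(X_t\in\cdot)-\nu_N\|_{TV}$. Since $\nu_N$ is stationary (Lemma \ref{lem:stat}), the map $t\mapsto d_\mu(t)$ is non-increasing, and for any coupling of $(X_t)$ started from $\mu$ with a stationary copy one has $d_\mu(t)\le \P(T_c>t)$, where $T_c$ is the coalescence time. It therefore suffices to build a good coupling and to estimate $T_c$ uniformly in the starting state. The coupling exploits the rigid geometry of $(X_k)$: from any state $j$ the chain descends deterministically and reaches $0$ in exactly $j\le N^\beta-1$ steps, the only randomness being the holding time at $0$, which is geometric with parameter $\varepsilon$ before the jump back to $N^\beta-1$. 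Thus each copy renews at $0$ with inter-renewal time $(N^\beta-1)+G$, $G$ geometric of mean $1/\varepsilon$, and its position is a deterministic function of the time since its last up-jump. We couple the two copies so that, as soon as they occupy $0$ in the same generation, they make identical holding decisions forever after and remain merged; hence $T_c$ is the first time both copies sit at $0$ simultaneously. After at most $N^\beta$ steps both are in the cyclic regime with a phase offset $\Delta$ of size at most $N^\beta$, and the offset between successive visits to $0$ evolves as the random walk $W_c=\sum_{i\le c}(G_i-G_i')$ in the number $c$ of completed cycles. This is a mean-zero, finite-variance, aperiodic walk on $\Z$, so reducing an offset of size $N^\beta$ to zero needs of order $(N^\beta)^2=N^{2\beta}$ cycles, i.e.\ of order $N^{2\beta}\cdot N^\beta=N^{3\beta}$ generations; this is the origin of the threshold $3\beta$.

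For (i), fix $\lambda>3\beta$ and $\eta\in(0,\lambda-3\beta)$ and split $\{0,\dots,N^\lambda\}$ into $K=\lfloor N^{\lambda-3\beta-\eta}\rfloor$ consecutive windows, each of length $N^{3\beta+\eta}$. Conditioning successively on the joint configuration at each window boundary (strong Markov property), it suffices to show that, uniformly over that configuration, the two copies coalesce within the window with probability at least some fixed $p>0$: this gives $\P(T_c>N^\lambda)\le(1-p)^{K}$. Inside a window both copies re-enter the cyclic regime within $N^\beta$ steps, leaving an offset of size at most $N^\beta$, and the remaining $N^{2\beta+\eta}\gg N^{2\beta}$ cycles give $W_c$ a standard deviation much larger than $N^\beta$; a standard random-walk hitting estimate then shows that $W_c$ reaches an $O(1)$-window around $-\Delta$ with probability bounded below uniformly in $\Delta\le N^\beta$, and the positive length of the $0$-sojourns turns such a near-coincidence of renewal times into a genuine simultaneous visit to $0$. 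Hence $\P(T_c>N^\lambda)\le(1-p)^{K}\le e^{-N^\delta}$ for $0<\delta<\lambda-3\beta-\eta$ and $N$ large, which is (i).

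For (ii), I condition on the independent geometric clock $\tau$, for which $\E[\tau]=N$, and set $T^\ast=N^{3\beta+\eta}$ with $\eta>0$ small. By monotonicity of $d_\mu$ and part (i) applied with $\lambda=3\beta+\eta>3\beta$, we have $d_\mu(t)\le e^{-N^\delta}$ for all $t\ge T^\ast$, while $d_\mu(t)\le 1$ always. Averaging over $\tau$ gives
\[
\|\P_\mu(X_\tau\in\cdot)-\nu_N\|_{TV}\le \E[d_\mu(\tau)]\le \P(\tau<T^\ast)+e^{-N^\delta}.
\]
Since $\P(\tau<T^\ast)=1-(1-1/N)^{T^\ast-1}\le T^\ast/N=N^{3\beta+\eta-1}$, the exponent satisfies $3\beta+\eta-1<-(\beta+\delta)$ for suitable small $\delta>0$ precisely when $4\beta<1$, i.e.\ $\beta<1/4$; choosing $\eta,\delta$ small yields the claimed bound $N^{-(\beta+\delta)}$. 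Thus (ii) follows from (i), the restriction $\beta<1/4$ entering solely through the requirement that the clock $\tau$ exceed the mixing time $N^{3\beta}$ with high enough probability.

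The main obstacle is the uniform-in-start lower bound $p$ in part (i): one must establish the random-walk hitting estimate simultaneously for every possible phase offset up to $N^\beta$ and every holding configuration at the window boundary, and must verify that the coupling really produces overlapping $0$-sojourns rather than merely near-coincident renewal times. Everything else — the reduction of $d_\mu$ to the coalescence time, the monotonicity of $d_\mu$, and the passage from (i) to (ii) — is routine once this estimate is in hand.
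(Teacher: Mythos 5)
Your overall architecture coincides with the paper's: bound the total variation distance by the tail of the time at which two coupled copies first sit in urn $0$ together, observe that the phase offset between the copies evolves cycle-by-cycle as a centered, finite-variance random walk built from differences of geometric sojourn times at $0$, conclude that mixing takes of order $N^{2\beta}$ cycles, i.e.\ $N^{3\beta}$ generations, and deduce (ii) from (i) by conditioning on the geometric clock. Your part (ii) is in fact literally the paper's argument: monotonicity of $t\mapsto\|\P_\mu(X_t\in\cdot\,)-\nu_N\|_{TV}$, the Bernoulli bound $\P(\tau<N^{\lambda})\le N^{\lambda-1}$, and the observation that one can choose $3\beta<\lambda<1-\beta-\delta$ precisely when $\beta<1/4$.

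The gap is the one you flag yourself, and it is the crux of part (i): converting ``the offset walk comes near $-\Delta$'' into an actual simultaneous visit to urn $0$, uniformly over starting configurations. Your framing of this as a point-hitting problem (the walk must enter an $O(1)$-window around $-\Delta$) is both unproven and harder than necessary: the increments $G_i-G_i'$ are unbounded, so the walk can jump over any $O(1)$ window, and a hitting estimate uniform in $\Delta\le N^\beta$ would require local CLT or potential-theoretic input. The paper's resolution is a structural observation that removes the hitting problem entirely. Let $d_i\in\{0,\dots,N^\beta-1\}$ be the position of one copy at the $i$-th return of the other copy to $0$. As long as no coupling has occurred, $d_i-d_{i-1}=W_{i-1}-V_{i-1}$ (the difference of the two geometric sojourn lengths); and the first time the accumulated increments would push $d_i$ outside $\{0,\dots,N^\beta-1\}$, coupling has already happened: an exit below $0$ means one copy's $0$-sojourn was contained in the other's, while an exit above $N^\beta$ means one copy is still sitting at $0$ when the other returns. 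In other words, overshoot is irrelevant because \emph{either} exit direction forces an overlap of the $0$-sojourns, so the coupling time is bounded by $m_R$, where $R$ is the two-sided exit time of the centered walk from $\{-N^\beta+1,\dots,N^\beta-1\}$. Part (i) then reduces to a standard exit-time tail bound (the paper cites Spitzer, Theorem 23.2, giving $\P(R>N^{2\beta+\delta'})\le e^{-N^{\delta}}$) together with a union bound ensuring all cycle lengths $m_i-m_{i-1}$ are at most $2N^\beta$. Once this exit-time fact is in hand, your windowing/restart scheme becomes unnecessary (though it would also work, since the exit estimate is exactly what supplies your uniform per-window success probability $p$).
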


\begin{proof}
Fix $\mu\in\mathcal{P}_{N^\beta}.$ Let $(Z_n)_{n\in\N_0}$ be a realization of the urn process started in the invariant distribution $\nu_N$ independent of $(X_n)_{n\in\N_0}.$ We couple $(X_n)$ and $(Z_n)$ by a Doeblin coupling in the following way: Let $\sigma_0:=\inf\{n\in \N_0: X_n=Z_n\}.$ Define 
\[\tilde{X}_n:=\begin{cases} X_n & \mbox{if }n\leq \sigma_0,\\
Z_n& \mbox{if }n>\sigma_0.\end{cases}
\]
Write $\P:=\P_{\mu\otimes\nu_N}.$ Then $\P(\tilde{X}_n=k)=\P_\mu(X_n=k)$ for all $n\in\N_0, k\in \{0,...,N^{\beta}-1\}.$
By Proposition 4.7 of \cite{LevinPeresWilmer}, we have
\be\label{eq:totvar} \|\P_\mu(X_n\in \cdot\,)-\nu_N\|_{TV}\leq \P(\tilde{X}_n\neq Z_n)=\P(\sigma_0>n).\ee
Our aim is therefore to bound $\P(\sigma_0>n).$ To this end we consider the difference of the two process at particular times. Define $m_0:=\inf\{n\geq 0:X_n=0\}, l_0:=\inf\{n\geq 0:Z_n=0\},$ and let recursively, for $i\geq 1,$

\[m_i:=\inf\{n>m_{i-1}:X_n=0, X_{n-1}=1\}\]
and
\[l_i:=\inf\{n>l_{i-1}:Z_n=0, Z_{n-1}=1\}.\]
Note that for all $i\geq 0$ we have $Z_{m_i}-X_{m_i}\geq 0$ and $X_{l_i}-Z_{l_i}\geq 0.$ Without loss of generality we can assume that $Z_0-X_0>0,$ which implies $m_0<l_0.$ Since the difference of the two processes remains constant as long as none of the two processes is in urn 0, we see that 
\be \sigma_0\in \{m_i:i\geq 2\}\cup \{l_i: i\geq 1\},\ee
i.e. the coupling always happens in urn 0, and it happens if either process $(Z_n)$ jumps from 1 to 0 while $(X_n)$ is in 0 or vice versa. 

Define for $i\geq 0$
\[ V_i:=\big|\big\{n\in\{m_i,...,m_{i+1}-1\}: X_n=0\big\}\big|,\]
and
\[W_i:=\big|\big\{n\in\{l_i,...,l_{i+1}-1\}: Z_n=0\big\}\big|,\]
the number of visits in urn 0 of either of the process during one `cycle' (note that between $m_i$ and $m_{i+1}$ the process $(X_k)$ has exactly one jump of lenght $N^\beta.$ By construction, $(V_i)_{i\geq 0}$ and $(W_i)_{i\geq 0}$ are independent sequences of iid geometric random variables with parameter $\varepsilon,$ and 
\be \label{eq:increments1}(Z_{m_i}-X_{m_i})-(Z_{m_{i-1}}-X_{m_{i-1}})=W_{i-1}-V_{i-1},\ee
\be \label{eq:increments2}
(X_{l_i}-Z_{l_i})-(X_{l_{i-1}}-Z_{l_{i-1}})=V_{i-1}-W_{i-1},\ee
$i\geq 1.$ Moreover we note that
\be \label{eq:distanceprocess}m_{i+1}-m_i=V_i+N^\beta,\quad l_{i+1}-l_i=W_i+N^\beta.\ee

The random sequence $(\sum_{i=0}^k(V_i-W_i))_{k\geq 0}$ is a random walk with centered increments whose variance (depending on $\varepsilon$ but not on $N$) is finite. Moreover, $\sigma_0$ can be controlled by the first time this random walk exits the set $\{-N^\beta+1,...,N^\beta-1\},$ since this event corresponds to either $(Z_n)$ `catching up' with $(X_n),$ or vice versa. More precisely, defining

\[ R:=\inf\big\{k\geq 0: |\sum_{i=0}^k (V_i-W_i)|\geq N^\beta\big\},\]
we see from \eqref{eq:increments1} and \eqref{eq:increments2} that
\be \label{eq:sigma_R} \sigma_0\leq m_R.\ee

Equation \eqref{eq:sigma_R} implies that for any $\lambda>0,$

\be \label{eq1}
\begin{split}
\P\big(\sigma_0>N^\lambda\big)&\leq  \P\big(\sum_{i=1}^R(m_i-m_{i-1})>N^\lambda\big)\\
&=1-\P\big(\sum_{i=1}^R(m_i-m_{i-1})\leq N^\lambda\big)\\ 
&\leq 1-\P\big(\{R<\frac{1}{2}N^{\lambda-\beta}\}\cap\{m_i-m_{i-1}
\leq 2N^\beta \;\forall i=1,...,\frac{1}{2}N^{\lambda-\beta}\}\big)\\
&\leq \P\big(\{R>\frac{1}{2}N^{\lambda-\beta}\}\cup\{\exists 1\leq i\leq \frac{1}{2}N^{\lambda-\beta}: m_i-m_{i-1}>2N^\beta\}\big)\\
&\leq \P\big(R>\frac{1}{2}N^{\lambda-\beta}\big)+\P\big(\exists 1\leq i\leq N^{\lambda-\beta}: m_i-m_{i-1}>2N^\beta\big).
\end{split}
\ee
To control the first term on the rhs, we use classical bounds on the exit time from an interval of symmetric random walks with finite variance, see e.g. Theorem 23.2 of \cite{Spitzer}. This provides that for every $\delta'>0$ there exists $\delta>0$ such that

\be \P\big(R>N^{2\beta+\delta'}\big)\leq e^{-N^\delta}.\ee
For $\lambda>3\beta,$ we can choose $\delta'>0$ such that $2\beta+\delta'<\lambda-\beta,$ hence we find the bound
\be\label{eq2} \P\big(R>\frac{1}{2}N^{\lambda-\beta}\big)\leq e^{-N^\delta}.\ee
To bound the second term in \eqref{eq1}, by \eqref{eq:distanceprocess} and a union bound we find, for $N$ large enough,

\be\label{eq3}\begin{split} \P\big(\exists 1\leq i\leq N^{\lambda-\beta}: m_i-m_{i-1}>2N^\beta\big)&\leq N^{\lambda-\beta}\P\big(V_1>N^\beta\big)\\
&= N^{\lambda-\beta}(1-\varepsilon)^{N^\beta}\leq N^{\lambda-\beta}e^{-\varepsilon N^\beta}\leq e^{-N^{\beta/2}}.\end{split}\ee
In view of \eqref{eq:totvar}, together the bounds \eqref{eq1}, \eqref{eq2} and \eqref{eq3} prove (i). For (ii), recall (see e.g.\cite{LevinPeresWilmer}, Chapter 4), that for any $k\geq l$ and $\mu\in\mathcal P_{N^\beta},$
\[\|\P_\mu(X_k\in\cdot\,)-\nu_N\|_{TV}\leq \|\P_\mu(X_l\in\cdot\,)-\nu_N\|_{TV}.\]
Using this, we see that for any $0<\lambda<1,$
\be \label{eq4}\begin{split}\|P_\mu (X_\tau&\in \cdot\,)-\nu_N\|_{TV} \\
\leq&\|\P_\mu(X_{N^{\lambda}}\in \cdot\,)-\nu_N\|_{TV}\cdot \P(\tau\geq N^{\lambda})+\|\P_\mu(X_{0}\in \cdot\,)-\nu_N\|_{TV}\cdot \P(\tau< N^{\lambda})\\
\leq & \|\P_\mu(X_{N^{\lambda}}\in \cdot\,)-\nu_N\|_{TV}+ \P(\tau< N^{\lambda}).\end{split}
\ee
If $3\beta<\lambda<1,$ we can bound the first term using (i), and second term 
by choosing $N$ large enough such that the Bernoulli inequality yields
\be \label{eq:bound2} \P(\tau<N^{\lambda})=1-\big(1-\frac{1}{N}\big)^{N^{\lambda}}\leq N^{\lambda -1}.\ee

Since we assumed $\beta<1/4,$ there exists $\delta>0$ such that we can chose $\lambda$ in a way that that $0<3\beta<\lambda<1-\beta-\delta<1,$ so that 
\[ \P(\tau<N^{\lambda})\leq N^{\lambda -1}\leq N^{-(\beta+\delta)}.\]
Plugging this together with \eqref{eq4} into (i) completes the proof.
\end{proof}

To put this result into a context, recall that the \emph{mixing time} of a Markov chain $(Y_n)$ with invariant distribution $\nu$ is often defined as 

\[\tau_{mix}:=\inf\big\{n>0: \sup_\mu\|P_\mu(Y_n=\cdot\;)-\nu\|_{TV}\big\}\leq \frac{1}{4}\]
(cf.\ for example \cite{LevinPeresWilmer}). It thus follows immediately from the previous lemma that for the urn process of the seed bank model,

\be \tau_{mix}\leq N^{3\beta+\delta}\ee
for all $\delta>0.$ This observation allows us to justify that, if $\beta<1/4,$ each generation is visited with probability $\approx \varepsilon^{-1}N^{-\beta}\sim\nu_N(0).$ Indeed, if $\beta$ is small enough, then for $N$ large the urn process is close to stationarity before the first coalescence, or actually, as we will see in the next section, before the first \emph{attempt} to coalesce. Note that Lemma \ref{lem:total_variation} is crucial to achieve this, and to prove this lemma we needed $\beta<1/4.$

\section{Proof of the main results}\label{sect:proof}

We are now going to use the urn process from the previous section in order to prove our main results. The crucial step is to calculate the time until the coalescence of two lines in terms of this urn process. 

\medskip

Let $(X_k)_{k\geq 0}$ and $(Y_k)_{k\geq 0}$ be two independent copies of the urn process, corresponding in the sense of Lemma \ref{lem:urn-ancestral} to the ancestral lines of two individuals $i$ and $j.$  Let $\tau_0=0$ and let $\tau_k,k\geq 1,$ be such that $(\tau_k-\tau_{k-1})_{k\geq 1}$ is
a sequence of independent geometric random variables with parameter $1/N,$ independent of $X$ and $Y.$ Let 
\be \kappa:=\inf\big\{k\geq 1: X_{\tau_k}=Y_{\tau_k}=0\big\}.\ee

\begin{lemma}\label{lem:change}
Assume that $X,Y$ independently follow the urn dynamics started from initial distribution $\gamma$ with support in $\{0,...,N^\beta-1\}.$ Then for $S^{(1)}, S^{(2)}$ started from $\gamma$,
\be T_{MRCA}(2)\stackrel{d}{=}\tau_\kappa.\ee
\end{lemma}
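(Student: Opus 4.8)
The plan is to reinterpret both sides of the claimed identity as the first generation at which two things coincide: both ancestral lines are simultaneously present, and an independent "success" of probability $1/N$ is attached to that generation. First I would invoke the coupling of Lemma~\ref{lem:urn-ancestral}: the set $\{S^{(i)}_n : n \in \N_0\}$ of generations in which individual $i$ has an ancestor coincides with the zero set $\{k : X_k = 0\}$ of the urn process $X$, and likewise $\{S^{(j)}_m : m \in \N_0\} = \{k : Y_k = 0\}$. Hence the requirement in the definition of $T_{MRCA}(2)$ that $k = S^{(i)}_n = S^{(j)}_m$ for some $n,m$ becomes, after this coupling, exactly $X_k = Y_k = 0$, so that
\[ T_{MRCA}(2) \stackrel{d}{=} \inf\big\{ k > 0 : X_k = 0,\ Y_k = 0,\ U^{(i)}_k = U^{(j)}_k \big\}, \]
where $X,Y$ are the independent urn processes started from $\gamma$ and the label families are independent of $(X,Y)$.

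Next I would isolate the role of the labels. Setting $b_k := \mathbf 1\{U^{(i)}_k = U^{(j)}_k\}$ for every $k \ge 1$ (which is well defined since the $U^{(i)}_k$ are specified for all generations, not only the ancestral ones), the independence and uniformity of the two label families make $(b_k)_{k \ge 1}$ an i.i.d.\ Bernoulli($1/N$) sequence independent of $(X,Y)$, and $T_{MRCA}(2) \stackrel{d}{=} \inf\{k>0 : X_k = Y_k = 0,\ b_k = 1\}$. For the right-hand side I would use the standard correspondence between a renewal process with i.i.d.\ geometric($1/N$) increments and i.i.d.\ Bernoulli($1/N$) marking: letting $\hat b_k := \mathbf 1\{k \in \{\tau_1,\tau_2,\dots\}\}$, the successive success times of an i.i.d.\ Bernoulli($1/N$) sequence have precisely i.i.d.\ geometric($1/N$) gaps, so $(\hat b_k)_{k\ge1}$ has the required law and is independent of $(X,Y)$ (as the $\tau$'s are). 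Since the marked generations are exactly $\{\tau_1 < \tau_2 < \cdots\}$, the index $\kappa = \inf\{k \ge 1 : X_{\tau_k} = Y_{\tau_k} = 0\}$ selects the first marked generation at which both urns sit in $0$, whence $\tau_\kappa = \inf\{k \ge 1 : \hat b_k = 1,\ X_k = Y_k = 0\}$.

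The two resulting expressions have identical form---each is the first generation $k \ge 1$ with $X_k = Y_k = 0$ carrying a successful independent Bernoulli($1/N$) mark---and both marking sequences share the same law and the same independence from the common urn pair $(X,Y)$. I would make this rigorous by realizing everything on one probability space, using a single pair $(X,Y)$ together with a single Bernoulli($1/N$) array as the common marking; under this coupling the two infima become literally equal, and transporting back through Lemma~\ref{lem:urn-ancestral} gives $T_{MRCA}(2) \stackrel{d}{=} \tau_\kappa$. The step demanding the most care---and the main obstacle---is the equivalence between the geometric-increment renewal process $(\tau_k)$ and the Bernoulli thinning $(\hat b_k)$: one must confirm the gap/marking correspondence together with the indexing convention (so that $\tau_1 \ge 1$, matching the constraint $k>0$ in $T_{MRCA}(2)$ and ruling out a spurious coalescence at generation $0$ when $\gamma=\delta_0$), and must check that evaluating $X,Y$ at the random times $\tau_k$ is consistent with reading them at the deterministic marked generations, which is legitimate precisely because the marking is independent of $(X,Y)$.
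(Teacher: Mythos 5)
Your proof is correct and takes essentially the same approach as the paper's: the paper also identifies the renewal times $\tau_k$ with the label-matching times $\{k : U^{(1)}_k = U^{(2)}_k\}$ (asserted ``without loss of generality, due to independence'') and then concludes via the urn/ancestral-line correspondence \eqref{eq:prob_urn1}. You simply make explicit the Bernoulli($1/N$)--geometric marking correspondence and the common-probability-space coupling that the paper compresses into a single sentence.
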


\begin{proof}
Let two sequences $(U_k^{(1)})_{k\geq 1}$ and $(U_k^{(2)})_{k\geq 1}$ of independent uniform random variables  on $\{1,...,N\},$ independent of $S, X$ and $Y$ be defined as in Section 2. Without loss of generality, due to independence, we can assume that $\tau_1=\inf\big\{k\geq 0: U_k^{(1)}=U_k^{(2)}\big\}$ and $\tau_k=\inf\big\{k>\tau_{k-1}: U_k^{(1)}=U_k^{(2)}\big\}, k\geq 2.$ Hence 
\[T_{MRCA}(2)=\inf\big\{\tau_k>0: \exists n,m\in\N: \tau_k=S_n^{(1)}=S_m^{(2)}\big\}.\]
Now the claim follows from equation \eqref{eq:prob_urn1} and the independence of the two lines.
\end{proof}

\paragraph*{Proof of Theorem \ref{thm:kingman}.}  Fix $0<\beta<1/4$ and $m\geq 2.$ Clearly for all $N\in\N$ the process $(A^N_k)_{k\in\N}$ is exchangeable. In particular, its dynamics does not depend on the respective sizes or order of blocks, or their elements, but only on the number of blocks. Therefore it is sufficient to consider the block-counting process $(|A_k^N|)_{k\in\N_0},$ which is a pure death process started from a fixed $m\in\N$ whose distribution is uniquely determined by the sequence of times at which a coalescence happens, and the specification of the type of coalescence at each of these times. Hence, to prove our main result, it is sufficient to show that the inter-coalescence times of $(A_k^N)$ are asymptotically (as $N\to\infty$) independent and exponentially distributed with the right parameters, and that multiple and simultaneous mergers are negligible. 

\medskip

In order to determine the distribution of the inter-coalescence times, we use Lemma \ref{lem:change}, which tells us that the time until two given lines coalesce -- ignoring the influence of the other lines -- is distributed as $\tau_\kappa.$ Since the differences $(\tau_k-\tau_{k-1})$ are geometric, we only need to determine the distribution of $\kappa.$ By Lemma \ref{lem:total_variation} (ii) there exists $\delta>0$ and $r_N$ such that $|r_N|\leq N^{-(\beta+\delta)},$ and

\[\P_{\gamma}(X_{\tau_1}=0)=\nu_N(0)+ r_N.\]
Since $\nu_N(0)\sim\varepsilon^{-1}N^{-\beta}$ as $N\to\infty$, we have $\lim_{N\to\infty}r_N/\nu_N(0)=0$ (note that $\varepsilon$ is independent of $N$), and hence
\be \P_{\gamma\otimes \gamma} (\kappa=1)=\P_{\gamma\otimes \gamma}(X_{\tau_1}=Y_{\tau_1}=0)=\P_\gamma(X_{\tau_1}=0) \P_\gamma(Y_{\tau_1}=0)
=\nu_N(0)^2(1+o(1)).\ee
Since the result of Lemma \ref{lem:total_variation} is uniform in the initial 
distribution $\mu,$ we see that for all $k\in\N,$ by the Markov property of $(X_k)$ and $(Y_k),$

\be \P_{\gamma\otimes \gamma}(\kappa=k\,|\, \kappa>k-1)=\nu_N(0)^2(1+o(1)),\ee
with the $o(1)-$notation independent of $k.$ 

This shows that $\kappa$ is asymptotically geometric distributed with parameter $\nu_N(0)^2$. More precisely, for any $c>0$ and geometric random variables 
$G_1$ with parameter $\nu_N(0)^2(1+c)$ and $G_2$ with parameter $\nu_N(0)^2(1-c)$ there exists $N_0\in\N$ such that for all $N\geq N_0$ we have
that $G_2\leq \kappa \leq G_1$ stochastically. Since
\be
T^N_{MRCA}(2)\stackrel{d}{=}\tau_{\kappa}=\sum_{k=1}^{\kappa}(\tau_k-\tau_{k-1}),
\ee
we get 
\be\sum_{i=1}^{G_2}(\tau_k-\tau_{k-1})
\stackrel{d}{\leq} T^N_{MRCA}(2)\stackrel{d}{\leq} \sum_{k=1}^{G_1}(\tau_k-\tau_{k-1}),\ee
$\stackrel{d}{\leq}$ indicating stochastic dominance.
An easy calculation using moment generating functions shows that a sum of a geometric number of geometric random variables is 
again geometrically distributed (with a parameter which is the product of the two parameters). Hence 
\[\sum_{k=1}^{G_1}(\tau_k-\tau_{k-1})\sim\mbox{Geo}(N^{-1}\nu_N(0)^2(1+c)),\]
 and similarly for the sum up to $G_2.$ From this we conclude that for any $t>0,$
\begin{equation}\begin{split}
\lim_{N\to \infty}P_{\gamma\otimes\gamma}\big(T^N_{MRCA}(2)>\lfloor \varepsilon^2 N^{2\beta+1}\rfloor t\big)\geq \lim_{N\to\infty}\Big(1-\frac{1}{N}\nu_N(0)^2(1+c)\Big)^{\lfloor \varepsilon^2 N^{2\beta+1}\rfloor t}
=e^{-t(1+c)},
\end{split}
\end{equation}
and similarly

\be\lim_{N\to \infty}P_{\gamma\otimes\gamma}\big(T^N_{MRCA}(2)>\lfloor \varepsilon^2 N^{2\beta+1}\rfloor t\big)\leq e^{-t(1-c)}.\ee
Since $c>0$ was arbitrary, we see that asymptotically as $N\to\infty,$ the time until two lines coalesce converges in distribution to the time of coalescence in Kingman's 2-coalescent. Further, if we consider $m\geq 2$ lineages, the first coalescence is dominated by resp. dominates the minimum of $\binom{m}{2}$ independent geometric variables with parameters $N^{-1}\nu_N(0)(1\pm c)$ each. Therefore, denoting by $T_{coal}(m)$ the time until the first two lines coalesce, we have in a similar fashion

\be 
\begin{split}
\lim_{N\to \infty}P_{\otimes\gamma^m}\big(T_{coal}(m)>\lfloor \varepsilon^2 N^{2\beta+1}\rfloor t\big)=&\lim_{N\to\infty}\Big(1-\frac{1}{N}\nu_N(0)^2\Big)^{\binom{m}{2}\cdot \lfloor \varepsilon^2 N^{2\beta+1}\rfloor t}
=e^{-t\cdot \binom{m}{2}}.
\end{split}
\ee
This shows that the time of the \emph{first} coalescence is exponentially distributed with the right parameter, independent of the choice of initial distribution $\gamma$ supported on $\{0,...,N^\beta\}.$ In order to see that the subsequent coalescence times are independent of $T_{coal}(m)$ and likewise exponentially distributed, we note that $(A_{T_{coal}(m)+k}^N)_{k\in\N_0}$ has the law of $(A_k^N)_{k\in\N_0}$ started in $m'<m$ distinct blocks, corresponding to independent ancestral lineages $(S_n^{(i)}, 1\leq i\leq m',$ started from some unknown initial distribution $\gamma'$ supported on $\{0,...,N^\beta\}.$ Hence, by the same argument as before,
\emph{all} inter-coalescence times are asymptotically independent and converge to those of Kingman's coalescent. In order to see that there are no multiple or simultaneous (multiple) mergers, we note that due to Lemma \ref{lem:total_variation} for any $\tau_i,$ the probability of a triple merger

\[P_\gamma(X_{\tau_i}=1, Y_{\tau_i}=1, Z_{\tau_i}=1)=\nu_N(0)^3(1+o(1)),\]
which is negligible compared to $\nu_N(0)^2.$ Similarly we see that two simultaneous double mergers are negligible. Standard arguments using exchangeability \cite{MoehleSagitov, Moehle} imply that any multiple or simultaneous (multiple) mergers are negligible. This proves our main result.
\hfill $\Box$

\paragraph*{Proof of Proposition \ref{thm:mrca}.} First of all, observe that at each time step a coalescence happens with probability at most $1/N,$ hence the time to the most recent common ancestor of two lines is a priori bounded by the  time to the most recent common ancestor in the Wright-Fisher model, hence $E[T^N_{MRCA}(2)]\geq N.$ Since clearly $T_{MRCA}(m)\geq T_{MRCA}(2)$ almost surely, we restrict ourselves to the case $m=2.$ Assume first that $\gamma=\delta_0,$ which means that $X_0=Y_0=0.$ Let $J_0:=\inf\{k\geq 1: X_k=N^\beta-1\}.$ By construction, $J_0\sim \mbox{Geo}(\varepsilon).$ Therefore

\be \label{eq5}P_0\big(J_0<T^N_{MRCA}(2)\big)\geq \frac{\varepsilon}{\varepsilon+1/N}\geq 1-\frac{1}{\varepsilon N},\ee
noting that the time until the first coalescence while both processes are in 0 is geometric with parameter $1/N.$ By construction, the dynamics of $X$ consists of excursions away from 0, which have lenght $N^\beta$ each, and after each excursion a period of length $J_i$ in 0, with $J_i$ iid Geo$(\varepsilon).$ Hence coalescence can only happen in (random) time period of the form 
\[\Big\{\sum_{i=0}^l (J_i+N^\beta),...,\sum_{i=0}^l (J_i+N^\beta)+J_{l+1}\Big\}.\]
By the same reasoning as in the derivation of \eqref{eq5},

\be P_0\Big(T^N_{MRCA}(2)\geq \sum_{i=0}^l (J_i+N^\beta)+J_{l+1}\,|\, T^N_{MRCA}(2)>\sum_{i=0}^{l-1} (J_i+N^\beta)+J_{l}\Big)\geq 1-\frac{1}{\varepsilon N}.\ee
Hence the number of excursions away from 0 of one particle before coalescence dominates a geometric random variable with parameter $1/(\varepsilon N),$ and the expected number of excursions away from 0 is at least $\varepsilon N.$ Since each excursion has length at least $N^\beta,$ we get

\be E_0[T^N_{MRCA}(2)]\geq \varepsilon N^{1+\beta}\vee N.\ee
Obviously our argument was independent of the initial distribution $\gamma,$ and hence (i) is proven.

\medskip

Let now $\beta<1/3.$ Let $\tau_0=0,$ and $\tau_k, k\in\N,$ be such that $(\tau_k-\tau_{k-1}),k\in\N$ are independent geometric random variables with parameter $1/N.$ Due to Lemma \ref{lem:change}, $T_{MRCA}(2)\geq \tau_1$ in distribution, and $\tau_1\sim \mbox{Geo}(1/N).$ Hence by \eqref{eq:bound2}, there exists $\delta' >0$ such that 

\[P_\gamma(T^N_{MRCA}(2)<N^{3\beta+\delta'})\to 0\]
as $N\to\infty.$ 
By Lemma \ref{lem:total_variation}, the Markov property, and Lemma \ref{lem:change}, we get
for all $\delta>0$ 
\be \begin{split}
P_{\gamma\otimes \gamma}\big(T^N_{MRCA}(2)\leq \varepsilon^2 N^{1+2\beta-\delta}\,|\, &T^N_{MRCA}(2)>N^{3\beta+\delta'}\big)\\
\leq & P_{\nu_N\otimes \nu_N}\big(T^N_{MRCA}(2)\leq \varepsilon^2 N^{2\beta+1-\delta}-N^{3\beta+\delta'}\big)+o(1)\\
\leq & 1-\big(1-\frac{1}{N}\cdot \nu_N(0)^2\big)^{\varepsilon^2 N^{2\beta+1-\delta}-N^{3\beta+\delta'}}+o(1)\\
\to & 0\, ,
\end{split}
\ee
since 
\[\frac{1}{N}\cdot \nu_N(0)^2\sim \frac{1}{\varepsilon^2 N^{2\beta+1}}=o(\varepsilon^2 N^{2\beta+1-\delta}-N^{3\beta+\delta'})\]
 as $N\to\infty,$ and this finishes the proof. \hfill $\Box$

\section{Simulations}
Since our main results, Theorem~\ref{thm:kingman} and
Corollary~\ref{cor:scaledxptime}, are valid only for $0 < \beta < 1/4,$ it is natural to investigate the coalescence time $\Tm{2}$ defined in \eqref{eq:TMRCAN} for $\beta \ge 1/4$
via simulations.   
The results in Figure~\ref{fig:histbetathird}, which show estimates
(histograms) of the distribution of the scaled time
$\Tm{2}/\left(\varepsilon^2N^{1 + 2\beta} \right)$ when $\beta = 1/3$,
suggest that $\varepsilon^2N^{1 + 2\beta}$ is indeed still the correct scaling.  
The distribution of $\Tm{2}$ fits an exponential distribution with corresponding mean $\overline{T}$ quite well in all cases. The mean $\overline{T}$ should be 1 in all cases. We see that this is not the case for small $\varepsilon,$ but the fit gets better as $\varepsilon$ increases. This can be explained by noting that for small $\varepsilon,$ the scales $\varepsilon^{-2}$ and $N^{1+2\beta}$ may not differ very much, illustrating the fact that the asymptotic results hold only if $\varepsilon$ is independent of $N.$ Similar results and
conclusions hold for $\beta = 1/2$ (Figure~\ref{fig:histbetahalf}).
The estimate of the standard error is close to
the mean in all cases. The simulation results for $\beta = 1/2$ are particularly
interesting since the long ancestral jumps $(\lfloor N^\beta \rfloor)$
are $10^3$ generations when $N = 10^6$, which can be considered a
significant extension of the ancestry of the lines. C code for the
simulations is available upon request.

\medskip
\newpage

\begin{figure}[h!]
\begin{center}
\caption{Histograms of $\Tm{2}/\left(\varepsilon^2N^{1 + 2\beta} \right)$ when $\beta = 1/3$, and $\varepsilon, N$ as shown. The solid fitted line is the density of the exponential with mean the corresponding mean $(\overline{T})$ of the simulated datapoints. Each histogram is normalised to have unit mass one. }%
\vspace{0.2cm}

\label{fig:histbetathird}%
\includegraphics[scale=0.66]{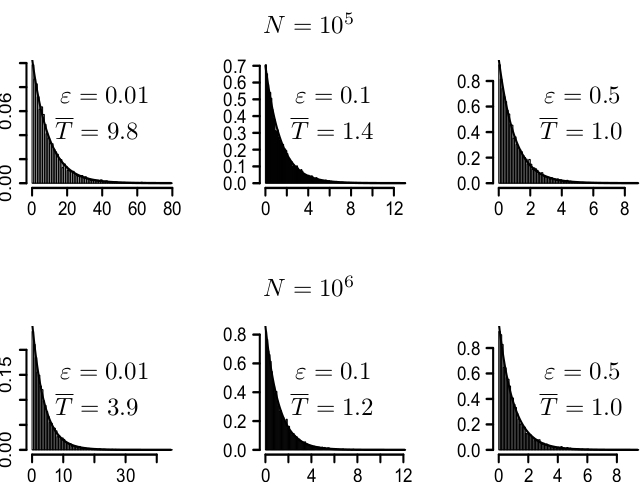}%
\end{center}
\end{figure}%%

\begin{figure}[h!]
\begin{center}
\caption{Histograms of $\Tm{2}/\left(\varepsilon^2N^{1 + 2\beta} \right)$ when $\beta = 1/2$, and $\varepsilon, N$ as shown. The solid fitted line is the density of the exponential with mean the corresponding mean $(\overline{T})$ of the simulated datapoints. Each histogram is normalised to have unit mass one.  }%
\vspace{0.2cm}
\label{fig:histbetahalf}%
\includegraphics[scale=0.66]{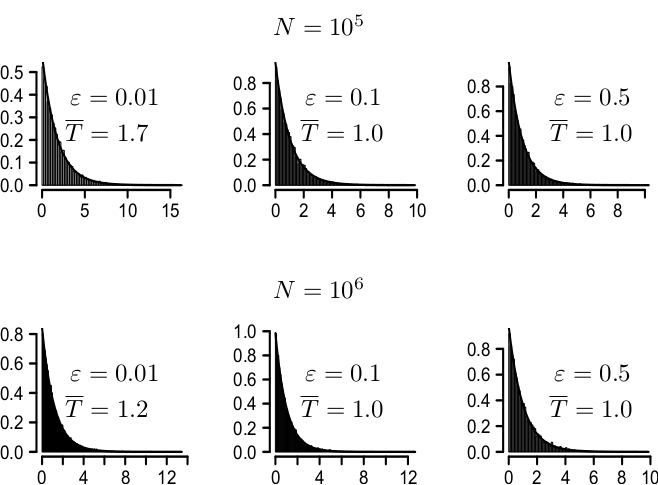}%
\end{center}
\end{figure}%%

\newpage

\paragraph*{Acknowledgements.} JB, BE and NK acknowledge support by the DFG SPP 1590 ``Probabilistic structures in evolution''. AGC is supported by the DFG RTG 1845, the Berlin Mathematical School (BMS), and the Mexican Council of Science in collaboration with the German Academic Exchange Service (DAAD). The authors wish to thank Julien Berestycki and Dario Span\`o for interesting discussions.

\end{document}